\newtheorem{theorem}{Theorem}[section]
\newtheorem{lemma}[theorem]{Lemma}
\theoremstyle{definition}
\newtheorem{remark}[theorem]{Remark}
\begin{document}
	
\title{A Discrete Multi-Sequence Cauchy-Schwarz-Like Inequality}
\author{Nihal Uppugunduri}
\date{}
\maketitle

\begin{abstract}
We prove a general inequality for more than two sequences mirroring that of the discrete two-sequence Cauchy-Schwarz.
\end{abstract}

\section{Introduction}

The ``traditional" Cauchy-Schwarz inequality for real numbers states that, if $a_i,b_i\in\mathbb{R}$ for $i\in [1,n]$, then
$$\sum_{i=1}^n {a_i^2} \sum_{i=1}^n {b_i^2} \geq \left(\sum_{i=1}^n {a_i b_i}\right)^2,$$
with equality iff one of the sequences is all zeros or there exists $c\neq 0$ such that $a_i=cb_i$ for all $i$. Many extensions and generalizations to this result have been established over the years, as discussed in \cite{dragomir} and \cite{yin}.

In this article, we prove an inequality that mirrors Cauchy-Schwarz for more than two sequences that has not yet seemed to appear in the literature. Specifically, we show that, if $a_{ij}\in\mathbb{R}$ for $i\in\ [1,n]$ and $j\in [1,m]$, then
$$\prod_{j=1}^m {\sum_{i=1}^n {a_{ij}^2}} \geq \left(\sum_{i=1}^n{\prod_{j=1}^m{a_{ij}}}\right)^2,$$
and we determine when equality can occur. We also derive a corresponding inequality over the complex numbers and determine its equality condition.

\section{Results}

We begin with an immediate lemma.

\begin{lemma}\label{main_lemma}
    Consider $\{c_i\}_{i=1}^n$ with $c_i\geq 0$. The second elementary symmetric sum $\sum_{1\leq i<j\leq n}{c_ic_j}$
    is zero iff at most one of the $c_i$ is nonzero.
\end{lemma}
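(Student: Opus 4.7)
My plan is to prove both directions directly using nonnegativity of the $c_i$.

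For the ``if'' direction, I would observe that whenever at most one of the $c_i$ is nonzero, every product $c_i c_j$ with $i \neq j$ contains at least one zero factor, so the entire sum $\sum_{1 \leq i < j \leq n} c_i c_j$ vanishes term by term. This is essentially by inspection.

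For the ``only if'' direction, the key observation is that since each $c_i \geq 0$, every term $c_i c_j$ in the sum is itself nonnegative. A sum of nonnegative reals is zero iff each summand is zero, so the hypothesis forces $c_i c_j = 0$ for all $i < j$. I would then argue by contradiction: if two distinct indices $a < b$ satisfied $c_a > 0$ and $c_b > 0$, then $c_a c_b > 0$, contradicting the previous sentence. Hence at most one $c_i$ can be nonzero.

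I expect no genuine obstacle here; the lemma really is ``immediate,'' as the paper advertises. The only subtlety worth flagging in the write-up is that the nonnegativity assumption is essential (otherwise cancellation among terms of opposite sign could produce a zero sum without any individual product vanishing), so I would make sure the phrase ``sum of nonnegative terms is zero iff each is zero'' is stated explicitly as the pivotal step.
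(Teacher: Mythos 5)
Your proposal is correct and follows essentially the same route as the paper: both directions rest on the observation that a sum of nonnegative terms vanishes iff every term $c_ic_j$ vanishes, after which the conclusion is immediate (the paper phrases the final step as a WLOG rather than your contradiction, but the content is identical). No gaps.
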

\begin{proof}
    Since $c_ic_j\geq 0$ but their sum is zero, $c_ic_j=0$ for all $i\neq j$. WLOG assume $c_1\neq 0$. Then $c_1c_i=0\rightarrow c_i=0, i\neq 1$. Conversely, since each term in the sum is $c_ic_j$ where $1\leq i<j$, $c_j=0\rightarrow c_ic_j=0$ and the sum vanishes.
\end{proof}

\begin{theorem}\label{main_result}
    If $a_{ij}\in\mathbb{R}$ for $i\in\ [1,n]$ and $j\in [1,m]$, with $m\geq 3$, then
    $$\prod_{j=1}^m {\sum_{i=1}^n {a_{ij}^2}} \geq \left(\sum_{i=1}^n{\prod_{j=1}^m{a_{ij}}}\right)^2,$$
    with equality iff either:
    \begin{enumerate}
        \item There exists $j'$ such that $a_{ij'}=0$, so that one of the sequences is all zeros, or
        \item There exists $i'$ such that $a_{ij}=0$ for all $i\neq i'$, so that there is at most one nonzero list of corresponding numbers in the sequences.
    \end{enumerate}
\end{theorem}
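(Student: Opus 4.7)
My plan is a two-step reduction followed by an equality analysis. First, I isolate the first column and apply the classical Cauchy-Schwarz inequality to the sequences $\{a_{i1}\}_{i=1}^n$ and $\{b_i\}_{i=1}^n$, where $b_i := \prod_{j=2}^m a_{ij}$, obtaining
$$\left(\sum_{i=1}^n \prod_{j=1}^m a_{ij}\right)^2 \;=\; \left(\sum_{i=1}^n a_{i1}b_i\right)^2 \;\le\; \left(\sum_{i=1}^n a_{i1}^2\right)\left(\sum_{i=1}^n \prod_{j=2}^m a_{ij}^2\right).$$

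Second, I bound $\sum_{i=1}^n \prod_{j=2}^m a_{ij}^2 \le \prod_{j=2}^m \sum_{i=1}^n a_{ij}^2$ by distributing the product on the right, which writes it as a sum of nonnegative terms $\prod_{j=2}^m a_{i_j,j}^2$ indexed by tuples $(i_2,\dots,i_m)\in\{1,\dots,n\}^{m-1}$. The diagonal tuples $i_2=\cdots=i_m$ recover the left-hand side, so the remaining summands constitute the nonnegative gap. Multiplying the two bounds yields the main inequality.

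For the equality analysis, both steps must be tight. Letting $S_j := \{i : a_{ij}\ne 0\}$, step-two tightness forces every off-diagonal summand $\prod_{j\ge 2} a_{i_j,j}^2$ to vanish, which, upon choosing representatives from each $S_j$, is equivalent to either (A) some $S_j$ with $j\ge 2$ is empty, or (B) $S_2 = \cdots = S_m = \{i^*\}$ for a single common index $i^*$. Case A is case~1 of the theorem. In case B, $b_i = 0$ for all $i \ne i^*$, and the classical Cauchy-Schwarz equality condition for step one then resolves into subcases: if $\{a_{i1}\}\equiv 0$ we land in case~1; if $\{b_i\}\equiv 0$ then $b_{i^*}=0$, forcing some $a_{i^*,j^*}=0$ with $j^*\ge 2$ and hence a fully zero column $j^*$, again case~1; and if $a_{i1}=c\,b_i$ for some constant $c$, the fact that $b_i=0$ for $i\ne i^*$ forces $a_{i1}=0$ off row $i^*$, giving case~2. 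The converse direction is immediate: in case~1 both sides vanish, while in case~2 both sides reduce to $\prod_{j=1}^m a_{i^*,j}^2$.

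The most delicate step will be extracting the dichotomy (A) vs.\ (B) from the off-diagonal vanishing condition cleanly, and then threading each subcase through the Cauchy-Schwarz equality condition so that no configuration escapes cases~1 and~2. Each individual verification is elementary, but the bookkeeping demands care, and it is here that the hypothesis $m\ge 3$ matters: for $m=2$ the auxiliary step is vacuous and the ``proportional sequences'' branch of Cauchy-Schwarz survives without reducing to case~2.
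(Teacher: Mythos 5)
Your argument is correct, and it takes a genuinely different route from the paper. The paper inducts on $m$, peeling off the \emph{last} column at each step via the chain $\prod_{j\le m}\sum_i a_{ij}^2 \ge (\sum_i\prod_{j<m}a_{ij})^2\sum_i a_{im}^2 \ge \sum_i(\prod_{j<m}a_{ij})^2\sum_i a_{im}^2 \ge (\sum_i\prod_j a_{ij})^2$; the middle step uses $(\sum_i c_i)^2\ge\sum_i c_i^2$, which forces a preliminary reduction to $a_{ij}\ge 0$ (handled via absolute values, with the equality condition transferred back at the end), and the equality analysis runs through a separate lemma on the vanishing of the second elementary symmetric sum, again by induction. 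You instead make a single application of Cauchy--Schwarz (column $1$ against $b_i=\prod_{j\ge2}a_{ij}$) followed by the direct expansion bound $\sum_i\prod_{j\ge2}a_{ij}^2\le\prod_{j\ge2}\sum_i a_{ij}^2$; because every quantity in that second step is already a square, no sign reduction is needed and no induction is used, and your support-set dichotomy (A)/(B) isolates exactly where $m\ge3$ enters (two distinct columns among $j\ge2$ are needed to collapse all supports to one common singleton $\{i^*\}$). The trade-off: the paper's induction hands you the $(m-1)$-sequence statement as a usable intermediate and localizes the equality analysis to one new column per step, whereas your version is shorter and more self-contained. One small point to tighten when you write it up: overall equality forces step one to be tight unconditionally, but it forces step two to be tight only when $\sum_i a_{i1}^2\neq 0$ (if column $1$ vanishes identically you are already in case~1, so the argument closes either way); your phrasing ``both steps must be tight'' should be split along that dichotomy. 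With that caveat the bookkeeping you flag does go through exactly as you describe.
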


\begin{proof}
	We first consider only $a_{ij}\geq 0$. To establish the inequality, we proceed by induction on $m\geq 2$, where the base case is the original Cauchy-Schwarz:
	\begin{align*}
	    \prod_{j=1}^m {\sum_{i=1}^n {a_{ij}^2}} &= \left(\prod_{j=1}^{m-1} {\sum_{i=1}^n {a_{ij}^2}}\right)\sum_{i=1}^n{a_{im}^2} \\
	    &\geq \left(\sum_{i=1}^n{\prod_{j=1}^{m-1}{a_{ij}}}\right)^2 \sum_{i=1}^n{a_{im}^2} \\
	    &\geq \sum_{i=1}^n{\left(\prod_{j=1}^{m-1}{a_{ij}}\right)^2} \sum_{i=1}^n{a_{im}^2} \\
	    &\geq \left(\sum_{i=1}^n{\prod_{j=1}^m{a_{ij}}}\right)^2.
	\end{align*}
	Next, we establish the necessity of the equality condition for $m\geq 3$ by induction. Assume that equality condition 1 is not the case. For the base case $m=3$, equality is achieved only if
	$$\left(\sum_{i=1}^n{a_{i1}a_{i2}}\right)^2 = \sum_{i=1}^n{\left(a_{i1}a_{i2}\right)^2},$$
	which occurs exactly when the second elementary symmetric sum of $\{a_{i1}a_{i2}\}_{i=1}^n$ vanishes. Thus, by \ref{main_lemma}, there exists $i'$ such that $a_{i1}a_{i2}=0$ for $i\neq i'$. But equality is achieved only if
	$$\sum_{i=1}^n{a_{i1}^2}\sum_{i=1}^n{a_{i2}^2} = \left(\sum_{i=1}^n{a_{i1}a_{i2}}\right)^2,$$
	so since equality condition 1 is not true, there exists $c\neq 0$ such that $a_{i2}=ca_{i1}$ for all $i$. Thus, for $i\neq i'$, $a_{i1}a_{i2}=ca_{i1}^2=0\rightarrow a_{i1}=0$ and $a_{i2}=c0=0$. Additionally, equality is achieved only if
	$$\sum_{i=1}^n{\left(a_{i1}a_{i2}\right)^2} \sum_{i=1}^n{a_{i3}^2} = \left(\sum_{i=1}^n{a_{i1}a_{i2}a_{i3}}\right)^2,$$
	so since equality condition 1 is not true, there exists $c\neq 0$ such that $a_{i3}=ca_{i1}a_{i2}$ for all $i$. Then, for $i\neq i'$, $a_{i3}=c0=0$, and equality condition 2 is established.
	
	If $m>3$, then equality is achieved only if
	$$\prod_{j=1}^{m-1} {\sum_{i=1}^n {a_{ij}^2}}
	= \left(\sum_{i=1}^n{\prod_{j=1}^{m-1}{a_{ij}}}\right)^2,$$
	so by inductive hypothesis there exists $i'$ such that $a_{ij}=0$ for $i\neq i'$ and $j<m$. Additionally, equality is achieved only if
	$$\sum_{i=1}^n{\left(\prod_{j=1}^{m-1}{a_{ij}}\right)^2} \sum_{i=1}^n{a_{im}^2} =
	\left(\sum_{i=1}^n{\prod_{j=1}^m{a_{ij}}}\right)^2,$$
	so since equality condition 1 is not true, there exists $c\neq 0$ such that $a_{im}=c\prod_{j=1}^{m-1}{a_{ij}}$ for all $i$. Then, for $i\neq i'$, $a_{im}=c0=0$, and equality condition 2 is established.
	
	Now, take arbitrary $a_{ij}\in\mathbb{R}$. We have
	\begin{align*}
	    \prod_{j=1}^m {\sum_{i=1}^n {a_{ij}^2}} &= \prod_{j=1}^m {\sum_{i=1}^n {|a_{ij}|^2}} \\
	    &\geq \left(\sum_{i=1}^n{\prod_{j=1}^m{|a_{ij}|}}\right)^2 \\
	    &\geq \left(\sum_{i=1}^n{\prod_{j=1}^m{a_{ij}}}\right)^2,
	\end{align*}
	with equality only if there exists $j'$ such that $|a_{ij'}|=0 \rightarrow a_{ij'}=0$, which is equality condition 1, or there exists $i'$ such that $|a_{ij}|=0 \rightarrow a_{ij}=0$ for all $i\neq i'$, which is equality condition 2. Furthermore, either of these conditions is sufficient.
\end{proof}

\begin{remark}
    The corresponding integral inequality does not hold. Specifically, for any $m,n\in\mathbb{Z}_+$ with $m>2$ and any $R\subseteq \mathbb{R}^n$ with nonzero measure, it is not the case that for all functions $\{f_i\}_{i=1}^m$, $f_i:R\rightarrow\mathbb{R}$, we have
    $$\prod_{i=1}^m\left(\int_R (f_i(x))^2 dx\right) \geq \left(\int_R \left(\prod_{i=1}^m f_i(x)\right) dx\right)^2,$$
    when all the integrals exist. A direct counterexample results from letting $f_i=c$ on a subset of $R$ with nonzero measure less than one, and zero everywhere else on $R$.
\end{remark}

\begin{remark}
    We have an analogous result over the complex numbers: if $a_{ij}\in\mathbb{C}$ for $i\in\ [1,n]$ and $j\in [1,m]$, with $m\geq 3$, then
    $$\prod_{j=1}^m {\sum_{i=1}^n {|a_{ij}|^2}} \geq \left|\sum_{i=1}^n{\prod_{j=1}^m{a_{ij}}}\right|^2,$$
    with equality condition same as above. This follows from \ref{main_result} and the generalized triangle inequality:
    $$\prod_{j=1}^m {\sum_{i=1}^n {|a_{ij}|^2}} \geq \left(\sum_{i=1}^n{\prod_{j=1}^m{|a_{ij}|}}\right)^2
    \geq \left|\sum_{i=1}^n{\prod_{j=1}^m{a_{ij}}}\right|^2.$$
    The equality condition for \ref{main_result} implies the necessity of the complex number condition, and the condition is still sufficient.
\end{remark}

\begin{remark}
The corresponding integral inequality again does not hold for complex numbers: for any $m,n\in\mathbb{Z}_+$ with $m>2$ and any $R\subseteq \mathbb{R}^n$ with nonzero measure, it is not the case that for all functions $\{f_i\}_{i=1}^m$, $f_i:R\rightarrow\mathbb{C}$, we have
$$\prod_{i=1}^m\left(\int_R |f_i(x)|^2 dx\right) \geq \left|\int_R \left(\prod_{i=1}^m f_i(x)\right) dx\right|^2,$$
when all the integrals exist. A direct counterexample results from letting $f_i=c\in\mathbb{R}$ on a subset of $R$ with nonzero measure less than one, and zero everywhere else on $R$.
\end{remark}

\end{document}